\documentclass[conference]{IEEEtran}
\IEEEoverridecommandlockouts
\usepackage{cite}
\usepackage{amsmath,amssymb,amsfonts}
\usepackage{mathrsfs}
\usepackage{algorithmic}
\usepackage{graphicx}
\usepackage{textcomp}
\usepackage[colorlinks=true, allcolors=blue]{hyperref}
\usepackage{color}

\newtheorem{remark}{Remark}
\newtheorem{definition}{Definition}
\newtheorem{proposition}{Proposition}
\newtheorem{theorem}{Theorem}
\newtheorem{lemma}{Lemma}

\newtheorem{assumption}{Assumption}

\def\BibTeX{{\rm B\kern-.05em{\sc i\kern-.025em b}\kern-.08em
    T\kern-.1667em\lower.7ex\hbox{E}\kern-.125emX}}
\begin{document}

\title{\huge{Automated Synthesis of Lyapunov Functions for Multi-Agent Systems under Jointly Connected Topology}
\thanks{*Corresponding author. This work was supported by the National Natural Science Foundation of China under Grant 62573014.}
}

\author{\IEEEauthorblockN{Shuyuan Zhang}
\IEEEauthorblockA{\textit{ICTEAM Institute} \\
\textit{UCLouvain}\\
Louvain-la-Neuve, Belgium \\
shuyuan.zhang@uclouvain.be}
\and
\IEEEauthorblockN{Lei Wang*}
\IEEEauthorblockA{\textit{Sch. Automat. Sci. Elect. Engineering} \\
\textit{Beihang University}\\
Beijing, China \\
lwang@buaa.edu.cn}
\and
\IEEEauthorblockN{Qing-Guo Wang}
\IEEEauthorblockA{\textit{Sch. Mechanical Engineering} \\
\textit{Wuhan University of Science and Technology}\\
Wuhan, China \\
wangqgsg@wust.edu.cn}
}

\maketitle

\begin{abstract}
This article investigates the consensus tracking problem of multi-agent systems under jointly connected topology through automated synthesis of Lyapunov functions. Based on the proposed distributed nonlinear control protocol, several consensus criteria for first-order multi-agent systems are established and certified by the construction and synthesis of more general polynomial Lyapunov functions. By employing sum-of-squares decomposition for multivariate polynomials, we can efficiently synthesize polynomial Lyapunov functions to achieve consensus verification in polynomial time, although the widely used quadratic Lyapunov functions do not exist. Moreover, polynomial coupling functions for our proposed protocol are concomitantly generated. Furthermore, the distributed nonlinear control protocol is extended to deal with second-order multi-agent systems, while ensuring second-order consensus verification. 
Finally, an example is presented to demonstrate the efficacy of our method.
\end{abstract}

\begin{IEEEkeywords}
Automated synthesis, polynomial Lyapunov functions, consensus verification, multi-agent systems, sum-of-squares.
\end{IEEEkeywords}

\section{Introduction}
\label{1}

In recent years, multi-agent systems (MASs) have attracted extensive research attention owing to their broad applicability in areas such as satellite formation flying, cooperative control of multiple vehicles, and various distributed coordination tasks.
Within the systems and control community, one of the fundamental research topics of MASs is consensus \cite{Egerstedt:2007,Andreasson:2014,Jiang:2020,Valcher:2017}, which is to drive all the agents to realize common dynamical behaviors via exchanging information among adjacent agents.

Research on consensus of MASs mainly focuses on identifying connectivity conditions for network topology, to ensure that the agents in MASs can achieve consensus.
The undirected and directed graphs, and the fixed and switching topologies have been widely investigated in the literature (e.g., \cite{Wang:2023,Stilwell:2006,Qin:2020,Jadbabaie:2003,Ren:2005,Cheng:2019,Hong:2007,Olfati-Saber:2006,Su:2009,Su:2019}),
where the joint connectivity is the mildest connectivity assumption on the network topology. In \cite{Jadbabaie:2003,Ren:2005,Cheng:2019}, linear consensus protocols were proposed to study MASs with single integrator dynamics and linear dynamics, to guarantee the group of agents to achieve consensus under the jointly connected switching topology.
Meanwhile, there is a growing interest in consensus algorithms that the agents are governed by second-order dynamics, which is extremely meaningful for the implementation of cooperative control strategies in engineering networked systems. Olfati-Saber in \cite{Olfati-Saber:2006} investigated the leader-follower tracking problem of second-order MASs, and later Su \emph{et al.} in \cite{Su:2009} extended it to the case that only a subset of agents have access to the leader's information.
In \cite{Su:2019}, Su and Lin studied the coordinated tracking control under a jointly connected topology, both in the presence and in the absence of the virtual leader's velocity information.

However, the studies mentioned above primarily concentrate on constructing quadratic Lyapunov functions to demonstrate the first-order and second-order consensus of MASs.
In addition to the challenge of acquiring quadratic Lyapunov functions, another problem being faced is that the existence of quadratic Lyapunov functions cannot be guaranteed.
Therefore, more complex candidates of Lyapunov functions may be required for certain systems. 
It is desirable to be able to determine Lyapunov functions algorithmically.
Our aim is to address the consensus problem of MASs under the jointly connected topology by automatically synthesizing more general Lyapunov functions instead of the quadratic form.
In addition, compared with the existing distributed control protocols, such as the traditional linear control protocols \cite{Jadbabaie:2003,Ren:2005,Cheng:2019,Su:2019}, and the nonlinear control protocols with the complicated gradient term \cite{Olfati-Saber:2006,Su:2009}, a more comprehensive distributed control protocol need to be provided in a convenient way.

Gratefully, the sum-of-squares (SOS) decomposition framework provides an effective way to compute polynomial Lyapunov functions on basis of the semi-definite programming \cite{Papachristodoulou:2002,Papachristodoulou:20052,Chesi:2013,Zhang:2022}.
In \cite{Papachristodoulou:2002}, Papachristodoulou and Prajna proposed an idea of algorithmic construction of polynomial Lyapunov functions to study the stability for nonlinear systems. And the tutorial was developed for system analysis based on the SOS decomposition in \cite{Papachristodoulou:20052}.
Later, the authors in \cite{Chesi:2013,Zhang:2022} utilized the method based on the SOS decomposition to certify the synchronization criteria for polynomial networked systems by the algorithmic search of polynomial Lyapunov functions.
In \cite{zhang2024consensus}, polynomial coupling functions were automatically synthesized using SOS programming to design a distributed control protocol that ensures consensus for MASs.
This work is an extension of our previous studies in \cite{Zhang:2022,zhang2024consensus}, from fixed topology to switching topology.

In this article, we investigate the consensus tracking problem of MASs under jointly connected topology using the SOS programming technique.
First, a distributed nonlinear control protocol is proposed for first-order MASs, which is a natural extension of the common linear control protocols \cite{Jadbabaie:2003,Ren:2005,Cheng:2019}.
Different from the existing methods which are dependent on constructing the quadratic Lyapunov functions,
the consensus criteria for MASs under the jointly connected topology are established with more general polynomial Lyapunov functions.
Then, based on the SOS decomposition for multivariate polynomials, we can efficiently synthesize the polynomial Lyapunov functions to verify consensus in polynomial time, and also the polynomial coupling functions for our proposed protocol. 
This approach remains effective even when quadratic Lyapunov functions are not applicable.
Furthermore, the distributed nonlinear control protocol is extended to handle second-order MASs, while ensuring automatic verification of second-order consensus. 
At last, a numerical example is presented to demonstrate the performance of our approach.

The remaining sections of this article are structured as follows. Section \ref{2} formulates the problem of interest, and some notations, definitions and lemmas are used throughout this article. In Section \ref{3}, the consensus criteria for MASs are established and certified by the automated synthesis of polynomial Lyapunov functions. After presenting an example in Section \ref{sim}, we conclude this article in Section \ref{4}.

\emph{Notations}: Throughout this article, $\mathbb{R}_{+}$ and $\mathbb{R}$ denote the sets of positive real numbers and real numbers, respectively. $\mathbb{R}^{n}$ is the set of real vectors of $n$-dimension, $\mathbb{R}^{m \times m}$ is the set of $m\times m$ dimensional real matrices, $\mathbb{R}[\cdot]$ is the set of real polynomials and $\mathbb{R}_{\tau}[\cdot]$ is the set of real polynomials of degree at most $\tau$. $I_{M}$ is an $M \times M$ dimensional identity matrix, $1_{M}$ is a column vector of $M$-dimension with each entry being $1$, $\text{diag}\{\nu_{1},\nu_{2},\ldots,\nu_{n}\}$ is a diagonal matrix with entries $\nu_{1},\nu_{2},\ldots,\nu_{n}$. $|\cdot|$ is the absolute value symbol, and $||\cdot||$ is any vector norm. For matrices $\Phi$ and $\Psi$, the Kronecker product is expressed by $\Phi\otimes \Psi$. $\Phi^{\top}$ represents the transpose of $\Phi$, and $\Psi>0$ means that $\Psi$ is positive definite. For a continuously differentiable function $V(\psi):\mathbb{R}^n\rightarrow\mathbb{R}$, let $\nabla V(\psi) = [\frac{\partial V(\psi)}{\partial \psi}]^{\top}$ and $\nabla V(\phi-\varphi) = [\frac{\partial V(\psi)}{\partial \psi}]^{\top}|_{\psi=\phi-\varphi}$, where $\psi, \phi, \varphi \in \mathbb{R}^n$. $\text{SOS}[z]$ denotes the set of multivariate SOS polynomials, i.e.,
\[\text{SOS}[z] = \left \{\rho(z) \in \mathbb{R}[z]  \middle |\;
\begin{aligned}
 &\rho(z)=\sum_{i=1}^k p_i^2(z), \forall z \in \mathbb{R}^{n} \\
 &p_i(z)\in \mathbb{R}[z], i=1,\ldots,k
\end{aligned}
\right\}.
\]

\section{Preliminaries}
\label{2}

Graph theory is a vital tool to depict the network topology, which is introduced as follows. Let $\mathscr{G}=(\mathscr{V},\mathscr{E},A)$ be a undirected graph with $N$ nodes, where $\mathscr{V}=\{v_{1},v_{2},\ldots,v_{N}\}$ is the set of nodes and $\mathscr{E}\subseteq\mathscr{V}\times\mathscr{V}$ is the set of edges between nodes. The weights of edges $\{v_{j},v_{k}\}$ are denoted by $A_{jk}\geq0$, and matrix $A=(A_{jk})\in \mathbb{R}^{N\times N}$ represents the weighted adjacent matrix, which implies the information exchange between node $j$ and node $k$. Define the Laplacian matrix $L=(L_{jk})\in \mathbb{R}^{N \times N}$ as $L_{jk}=-A_{jk}$ for $j\neq k$ and $L_{jj}=\sum_{k=1,k\neq j}^N A_{jk}$. A path of length $l$ is a sequence of edges $(v_{j_{1}},v_{j_{2}})$, $(v_{{j}_{2}},v_{j_{3}})$, $\cdots$, $(v_{j_{l}},v_{j_{l+1}})$ with $l+1$ distinct nodes. Graph $\mathscr{G}$ is said to be connected if there exists a path connecting any two distinct nodes.

\begin{definition} \cite{Shi:2009} The union of a collection of graphs $\mathscr{G}_{1},\mathscr{G}_{2},\ldots,\mathscr{G}_{m}$ is defined as $\mathscr{G}_{M}$, where the node and edge sets of $\mathscr{G}_{M}$ comprise the union of the node and edge sets of all graphs in the collection. $\mathscr{G}_{1},\mathscr{G}_{2},\ldots,\mathscr{G}_{m}$ is said to be jointly connected if the union graph $\mathscr{G}_{M}$ is a connected graph.
\end{definition}

Consider an MAS with $N$ follower agents and one virtual leader, in which the dynamics of each follower agent $i$ are described by
\begin{align}
\dot{z}_{i}(t)= u_{i}(t), \label{MAS}
\end{align}
with $i=1,\ldots,N$, where $z_{i}(t)=[z_{i1}(t),\ldots,z_{in}(t)]^{\top}\in \mathbb{R}^{n}$ is the state vector of agent $i$, and $u_{i}(t) \in \mathbb{R}^{n}$ is the control input for the $i$-th agent, which is expressed by the following distributed nonlinear control protocol:
\begin{align}
u_{i}=\sum_{j \in \mathcal{N}_{i}}A_{ij}^{\sigma}(h({z}_{j})-h({z}_{i})) +d_{i}^{\sigma}(h({z}_{\gamma})-h({z}_{i})), \label{Protocol}
\end{align}
where $\mathcal{N}_{i}$ is the neighborhood set of the $i$-th agent;
$A_{ij}^{\sigma}$ is the adjacent matrix associated with the switching graph $\mathscr{G}_{\sigma(t)}$, and $\sigma(t):[0,\infty)\rightarrow \mathcal{M}=\{1,\ldots,m\}$ is a switching signal, which is a piecewise constant function; $d_{i}^{\sigma}>0$ if agent $i$ is a neighbor of the leader and $d_{i}^{\sigma}=0$ otherwise; $h(\cdot)=[h_{1},\ldots,h_{n}]^{\top}\in \mathbb{R}^{n} \rightarrow \mathbb{R}^{n}$ is the polynomial coupling function;
${z}_{\gamma}$ is a constant vector generated by the virtual leader as the reference.

Based on the distributed nonlinear control protocol \eqref{Protocol}, our goal is to drive the state trajectories of all the follower agents to achieve the consensus tracking of the virtual leader. The definition of consensus is presented exactly as follows.

\begin{definition} \cite{Yu:2011} System \eqref{MAS} achieves consensus if for $\forall \epsilon >0$, there exists $T \in \mathbb{R}_{+}$ such that
\begin{align}
||z_{i}(t)-{z}_{\gamma}|| \leq \epsilon, \ \forall t\geq T  \label{D1}
\end{align}
with $i=1,\ldots,N$ for any initial conditions.
\end{definition}

For the sake of convenience, we divide the time interval $[0,T)$ into a finite sequence of bounded, non-overlapping and contiguous time intervals $[t_{\kappa}, t_{\kappa+1})$ for $\kappa=0,1,2,\ldots$ with $t_{0}=0$, and $t_{\kappa+1}>t_{\kappa}$. For each
$[t_{\kappa}, t_{\kappa+1})$, suppose that there exists a sequence of non-overlapping,
contiguous subintervals $[t_{\kappa}^{0},t_{\kappa}^{1}),\ldots,[t_{\kappa}^{l},t_{\kappa}^{l+1}),\ldots,
[t_{\kappa}^{s_{\kappa}-1},t_{\kappa}^{s_{\kappa}})$
with $t_{\kappa}= t_{\kappa}^{0}$, and $t_{\kappa+1}= t_{\kappa}^{s_{\kappa}}$ for some positive integers $s_{\kappa}$.
In this article, we suppose that there exists a dwell time $\tau \geq 0$ satisfying $t_{\kappa}^{l+1}-t_{\kappa}^{l} \geq \tau$ for $l= 0,1,\ldots,s_{\kappa}-1$. Note that the switching graph only switches at time instants $t_{\kappa}^{0},t_{\kappa}^{1},\ldots, t_{\kappa}^{s_{\kappa}-1}$, that is, the switching topology is fixed in each time subinterval $[t_{\kappa}^{l},t_{\kappa}^{l+1})$. Here, each graph on the time subinterval may be disconnected, while we only require that the union of graphs $\mathscr{G}_{\sigma(t)}$ on the time interval $[t_{\kappa}, t_{\kappa+1})$ is jointly connected.

\begin{lemma}
Denote $D=\text{diag}\{d_{1},d_{2},\ldots,d_{N}\}$. Let the Laplacian matrices $L_{1},L_{2},\ldots,L_{m}$ and the diagonal matrices $D_{1},D_{2},\ldots,D_{m}$ be associated with graphs $\mathscr{G}_{1},\mathscr{G}_{2},\ldots,\mathscr{G}_{m}$, we have $\sum_{i=1}^{m} (L_{i}+D_{i})$ is positive definite if these graphs are jointly connected.
\end{lemma}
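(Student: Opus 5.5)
The plan is to show that the quadratic form of $S=\sum_{i=1}^{m}(L_i+D_i)$ is strictly positive on every nonzero vector. First I would fix the convention implicit in the statement. ``Jointly connected'' must be read for the augmented graph that includes the virtual leader as node $v_0$, with an edge $\{v_0,v_j\}$ of weight $d_j^{i}$ in $\mathscr{G}_i$ whenever $d_j^{i}>0$. Without the leader, $\sum_i L_i$ annihilates $1_N$ and the claim is false, so this reading is forced.

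\textbf{Step 1: collapse to a single graph.} Laplacians are linear in the weights, so $\sum_{i=1}^{m}L_i=L_M$ is the Laplacian of the union graph $\mathscr{G}_M$ with weights $A^M_{jk}=\sum_i A^{i}_{jk}$. Likewise $\sum_i D_i=D_M=\mathrm{diag}\{d^M_1,\ldots,d^M_N\}$ with $d^M_j=\sum_i d^{i}_j\geq 0$. An edge lies in $\mathscr{G}_M$ exactly when its summed weight is positive, since all weights are nonnegative. Hence $S=L_M+D_M$, and the augmented union graph on $\{v_0,\ldots,v_N\}$ is connected by hypothesis.

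\textbf{Step 2: the quadratic form.} For $x\in\mathbb{R}^N$, the standard identity for undirected Laplacians gives
\begin{align*}
x^{\top}(L_M+D_M)x=\frac12\sum_{j,k=1}^{N}A^M_{jk}(x_j-x_k)^2+\sum_{j=1}^{N}d^M_j x_j^2\geq 0 ,
\end{align*}
so $S$ is symmetric positive semidefinite.

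\textbf{Step 3: trivial kernel.} Suppose $x^{\top}Sx=0$. Every term above is nonnegative, so each vanishes. Thus $x_j=x_k$ whenever $A^M_{jk}>0$, and $x_j=0$ whenever $d^M_j>0$. Set $x_0:=0$; the leader terms then read $d^M_j(x_j-x_0)^2=0$. So $x$ is constant along every edge of the connected augmented union graph. Take any follower $v_j$ and a path from $v_0$ to $v_j$; propagating equality along it gives $x_j=x_0=0$. Hence $x=0$, and $S>0$.

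\textbf{Main obstacle.} The only delicate point is the interpretation of joint connectivity. It must include the leader node; otherwise at least one $D_i$ must be nonzero and reach the union's component. I would state this convention explicitly at the start of the proof. The rest is routine.
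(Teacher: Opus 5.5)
The paper states Lemma 1 with neither a proof nor a citation, so there is no argument of the authors to compare against. Your proof is correct and is the standard argument for this fact, and it is complete:
\begin{itemize}
\item collapse $\sum_i(L_i+D_i)$ into $L_M+D_M$ for the weighted union;
\item write the quadratic form as nonnegative edge terms $A^M_{jk}(x_j-x_k)^2$ plus leader terms $d^M_j x_j^2$;
\item propagate $x_0=0$ along paths of the connected augmented union graph.
\end{itemize}
It also covers the way the lemma is used in the proof of Theorem 1. There the summed matrices $G^{\sigma(t_\kappa^j)}$ may repeat, but repetition only rescales weights and does not change which edges are present.

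Your insistence that joint connectivity be read for the graph that includes the leader node $v_0$ is the right one. It is exactly the hypothesis the paper imposes in Theorems 1 and 2 (joint connectivity ``among the follower and leader agents''). It is also genuinely needed: if all $D_i=0$, the sum annihilates $1_N$. So a reading with the union graph built from follower graphs only would make the lemma fail.

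One small wording fix concerns the alternative in your last paragraph. It should say that every connected component of the follower union contains some node $j$ with $d^i_j>0$ for some $i$. That condition is just connectivity of the augmented union restated.
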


\begin{lemma} \cite{Chen:2008}
Given a symmetric Laplacian matrix $L=(L_{ij}) \in \mathbb{R}^{N \times N}$, it holds that
\begin{align}
&\sum_{i=1}^{N}\sum_{j=1}^{N} \alpha_{i}^{\top}L_{ij}\beta_{j} \nonumber\\
&= -\sum_{i=1}^{N-1}\sum_{j=i+1}^{N}(\alpha_{i}-\alpha_{j})^{\top}L_{ij}(\beta_{i}-\beta_{j}), \label{L2}
\end{align}
where $\alpha_{i}$, $\beta_{j} \in \mathbb{R}^{n}$ for $i,j=1,\ldots,N$.
\end{lemma}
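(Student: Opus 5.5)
The plan is a direct computation that uses two structural facts about the Laplacian: every row sums to zero, and $L_{ij}=L_{ji}$. Since each $L_{ij}$ is a scalar, $\alpha_i^{\top}L_{ij}\beta_j = L_{ij}\,\alpha_i^{\top}\beta_j$, so the whole question is how the scalar weights redistribute across the double sum.

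\emph{Step 1 (remove the diagonal).} By the definition of the Laplacian in Section \ref{2}, $L_{ii} = -\sum_{j\neq i} L_{ij}$. Substituting this into the diagonal terms lets me absorb them into the off-diagonal ones:
\begin{align*}
\sum_{i=1}^{N}\sum_{j=1}^{N} \alpha_i^{\top}L_{ij}\beta_j
= \sum_{i=1}^{N}\sum_{j\neq i} L_{ij}\,\alpha_i^{\top}(\beta_j-\beta_i).
\end{align*}

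\emph{Step 2 (pair the terms).} Next I group each unordered pair $\{i,j\}$ with $i<j$, which collects the $(i,j)$ and $(j,i)$ contributions. Using $L_{ji}=L_{ij}$, the pair contributes
\begin{align*}
L_{ij}\left[\alpha_i^{\top}(\beta_j-\beta_i)+\alpha_j^{\top}(\beta_i-\beta_j)\right]
= -L_{ij}\,(\alpha_i-\alpha_j)^{\top}(\beta_i-\beta_j).
\end{align*}

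\emph{Step 3 (conclude).} Summing over $1\le i<j\le N$ and moving the scalar back between the vectors gives exactly the right-hand side of \eqref{L2}.

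The only point needing care is the bookkeeping in Step 2. Each ordered off-diagonal pair must be counted exactly once, and symmetry is essential there, because for a nonsymmetric Laplacian the identity fails. I expect nothing more difficult than this.
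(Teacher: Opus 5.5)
Your proof is correct: using zero row sums to replace $L_{ii}\alpha_i^{\top}\beta_i$ by $-\sum_{j\neq i}L_{ij}\alpha_i^{\top}\beta_i$, and then pairing the $(i,j)$ and $(j,i)$ terms via $L_{ji}=L_{ij}$, yields exactly $-L_{ij}(\alpha_i-\alpha_j)^{\top}(\beta_i-\beta_j)$ for each $i<j$. The paper does not prove this lemma itself (it only cites the source), and your argument is the standard direct verification that citation relies on, so nothing is missing.
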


\begin{lemma} \cite{Yu:2014}
For a uniformly continuous and nonnegative function $g(t)$, if
\begin{align}
\lim_{t \rightarrow \infty} \int_{t}^{t+\epsilon} g(\chi) d\chi =0
\label{L3}
\end{align}
holds for some $\epsilon>0$, then it follows that $\lim_{t \rightarrow \infty} g(t)=0$.
\end{lemma}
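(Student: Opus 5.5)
The statement immediately above is the lemma from \cite{Yu:2014}: a uniformly continuous, nonnegative $g$ whose integrals over windows of some fixed length $\epsilon$ tend to zero must itself tend to zero. This is a Barbalat-type result, and I would prove it by contradiction using only uniform continuity and nonnegativity.

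Suppose $g(t)\not\to 0$. Then there exist $\eta>0$ and a sequence $t_k\to\infty$ with $g(t_k)\geq \eta$ for all $k$. Passing to a subsequence, we may also assume $t_{k+1}\ge t_k+\epsilon$, although this is not essential.

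By uniform continuity there is a $\delta>0$ such that $|g(t)-g(s)|\le \eta/2$ whenever $|t-s|\le\delta$. Set $\delta' = \min\{\delta,\epsilon\}$. Then $g(t)\ge \eta/2$ on $[t_k, t_k+\delta']$, and since $\delta'\le\epsilon$ this interval lies inside $[t_k,t_k+\epsilon]$. Because $g\ge 0$ on the rest of the window,
\begin{align}
\int_{t_k}^{t_k+\epsilon} g(\chi)\,d\chi \;\ge\; \int_{t_k}^{t_k+\delta'} g(\chi)\,d\chi \;\ge\; \frac{\eta\delta'}{2}>0 \nonumber
\end{align}
for every $k$. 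Since $t_k\to\infty$, this contradicts \eqref{L3}. Hence $\lim_{t\to\infty} g(t)=0$.

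There is no serious obstacle here. The one point to handle carefully is that the $\delta$ from uniform continuity may exceed $\epsilon$, which is why we use $\delta'=\min\{\delta,\epsilon\}$ to keep the lower-bound interval inside the integration window. Nonnegativity is what lets us discard the remainder of the window when bounding the integral from below. Continuity alone would not suffice, because without a uniform $\delta$ the intervals on which $g\ge \eta/2$ could shrink as $k$ grows, and their contributions to the integral could then vanish.
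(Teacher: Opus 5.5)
Your proof is correct and complete. Taking $\delta'=\min\{\delta,\epsilon\}$ keeps the interval where $g\ge\eta/2$ inside $[t_k,t_k+\epsilon]$, and nonnegativity then gives the uniform lower bound $\eta\delta'/2$ on the window integrals, which contradicts their convergence to zero. The paper does not prove this lemma; it cites \cite{Yu:2014}. Your contradiction argument is the standard Barbalat-type proof of this fact.
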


\section{Main Results}
\label{3}

In this section, we discuss the problems of consensus analysis and verification of MASs with the joint connectivity condition. Several sufficient criteria are proposed by the construction and synthesis of polynomial Lyapunov functions to ensure the consensus of first-order MASs. Moreover, we extend the proposed method to investigate MASs with second-order dynamics under the jointly connected topology.

\subsection{Consensus Verification for First-order MASs}
\label{3-1}

In this subsection, the consensus criteria are established for MAS \eqref{MAS} based on the distributed nonlinear control protocol \eqref{Protocol}. The polynomial Lyapunov functions are automatically synthesized to achieve consensus verification via SOS programming.

To begin with, the following assumption about $h(\cdot)$ is given.

\begin{assumption} For $h(\cdot)\in \mathbb{R}[\cdot]: \mathbb{R}^n\rightarrow\mathbb{R}^n$, there exist a continuously differentiable, radially unbounded, positive definite polynomial function $V(\cdot):\mathbb{R}^n\rightarrow\mathbb{R}_{+}$, a vector valued function of polynomial $q(\cdot):\mathbb{R}^n\rightarrow\mathbb{R}^m$ with $q(\alpha)=0 \Leftrightarrow \alpha = 0$ such that for $\forall \alpha, \beta, \gamma \in \mathbb{R}^n$, there holds
\begin{align}
F_{\nabla V}(\alpha-\gamma,\beta-\gamma)F_{h}(\alpha,\beta) \geq  H_{q,\Psi}(\alpha-\gamma,\beta-\gamma), \label{A1}
\end{align}
where $F_{\nabla V}(\alpha,\beta)= \nabla V(\alpha)-\nabla V(\beta)$, $F_{h}(\alpha,\beta)= h(\alpha)-h(\beta)$, $H_{q,\Psi}(\alpha,\beta)=[q(\alpha)-q(\beta)]^{\top}\Psi[q(\alpha)-q(\beta)]$, and $\Psi \in \mathbb{R}^{m\times m}$ is a positive definite matrix.
\end{assumption}

\begin{remark}
It should be noted that if $V(\cdot)$ is a function of quadratic form, i.e., $V(z)=\frac{1}{2}z^{\top}Pz$, where $P$ is a positive definite matrix, then condition \eqref{A1} turns out to be $(\alpha-\beta)^{\top}P(h(\alpha)-h(\beta)) \geq 0$, which is a frequently used constraint
about $h(\cdot)$ \cite{Saber:2003,Liang:2020}. More specifically, if $q(z)=z$, the inequality \eqref{A1} reduces to $(\alpha-\beta)^{\top}P(h(\alpha)-h(\beta))\geq (\alpha-\beta)^{\top} \Psi(\alpha-\beta)$, which is usually used in quadratic Lyapunov function method to study the consensus problem (e.g., \cite{Chen:2008,Wei:2020,Rossa:2023}). Thus, the constraint on $h(\cdot)$ is reasonable. 
Moreover, Assumption 1 has been adopted in many practical systems; see, for example, the examples reported in \cite{Zhang:2022, xiao2026synchronization}.
\end{remark}

\begin{theorem}
Assuming that Assumption 1 is valid. If the switching topology graph among the follower and leader agents is jointly connected across each time interval $[t_{\kappa},t_{\kappa+1})$ for $\kappa=0,1,\ldots$, then the consensus of system \eqref{MAS} is achieved under protocol \eqref{Protocol}.
\end{theorem}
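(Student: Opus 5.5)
We would take the Lyapunov candidate $W(t)=\sum_{i=1}^{N} V(z_i(t)-z_\gamma)$, with $V$ the polynomial function from Assumption 1. Since $V$ is positive definite and radially unbounded, it suffices to show $z_i-z_\gamma\to 0$. As $z_\gamma$ is constant, along any subinterval $[t_\kappa^l,t_\kappa^{l+1})$ with fixed graph $\sigma$ we have
\begin{align*}
\dot W=\sum_{i=1}^N \nabla V(z_i-z_\gamma)^{\top}\Big[-\sum_{j}L^{\sigma}_{ij}h(z_j)-d_i^{\sigma}\big(h(z_i)-h(z_\gamma)\big)\Big].
\end{align*}
Because $L^\sigma 1_N=0$, we may replace $h(z_j)$ by $h(z_j)-h(z_\gamma)$. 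Lemma 2, with $\alpha_i=\nabla V(z_i-z_\gamma)$ and $\beta_j=h(z_j)-h(z_\gamma)$, then gives pairwise terms $A^\sigma_{ij}\,[\nabla V(z_i-z_\gamma)-\nabla V(z_j-z_\gamma)]^{\top}[h(z_i)-h(z_j)]$ with a minus sign. Assumption 1 with $(\alpha,\beta,\gamma)=(z_i,z_j,z_\gamma)$ bounds each such term below by $H_{q,\Psi}(z_i-z_\gamma,z_j-z_\gamma)$.

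For the leader terms, we use $V(0)=0$ as a minimum, so $\nabla V(0)=0$. Applying Assumption 1 with $\beta=\gamma=z_\gamma$ gives $\nabla V(z_i-z_\gamma)^{\top}(h(z_i)-h(z_\gamma))\ge q'(z_i-z_\gamma)^{\top}\Psi q'(z_i-z_\gamma)$, where $q'(e):=q(e)-q(0)$. (We take $q(0)=0$, which the assumption $q(\alpha)=0\Leftrightarrow\alpha=0$ implies.) Writing $e_i=z_i-z_\gamma$ and $Q=[q(e_1);\dots;q(e_N)]$, we obtain on each subinterval
\begin{align*}
\dot W\le -Q^{\top}\big((L_{\sigma}+D_{\sigma})\otimes \Psi\big)Q=:-g(t)\le 0 .
\end{align*}
Here we use Lemma 2 backwards to recombine the pairwise $q$-differences into the Laplacian quadratic form, and the product structure is PSD since $L_\sigma+D_\sigma\ge0$ and $\Psi>0$. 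Hence $W$ is nonincreasing and converges, and the trajectories stay in a compact sublevel set. This gives boundedness of $z_i$, $h(z_i)$, $\dot z_i$, and hence uniform continuity of $q(e_i(t))$.

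\textbf{Main obstacle: extracting convergence from the switching.} Integrating gives $\int_{t_\kappa}^{t_{\kappa+1}} g\,dt\to 0$ as $\kappa\to\infty$. On every subinterval of length at least $\tau$, $\int Q^{\top}((L_{\sigma_l}+D_{\sigma_l})\otimes\Psi)Q\,dt\to0$. We would compare $Q(t)$ with $Q(t_\kappa)$, using uniform continuity (bounded derivative of $Q$ on the compact set) to control the error over the window $[t_\kappa,t_\kappa+s\tau]$; this requires the interval lengths, or at least the number of subintervals used, to be uniformly bounded. Summing over $l$ then yields
\begin{align*}
Q(t_\kappa)^{\top}\Big(\sum_l (L_{\sigma_l}+D_{\sigma_l})\otimes\Psi\Big)Q(t_\kappa)\to0 .
\end{align*}
By Lemma 1 the sum matrix is positive definite, and since finitely many graphs give a uniform lower eigenvalue bound, $Q(t_\kappa)\to0$. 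Together with uniform continuity, or alternatively via Lemma 3 applied to $\|Q(t)\|^2$, this gives $Q(t)\to0$.

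Finally, $q(e)=0\Leftrightarrow e=0$ and the continuity of $q$ on the compact sublevel set imply that $e_i(t)\to0$, which is the consensus in \eqref{D1}. The delicate point is the uniformity: we need bounded interval lengths and the fact that only finitely many topologies occur. If the paper's setting does not guarantee these, we would state them as the implicit standard assumption.
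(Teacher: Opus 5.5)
\textbf{There is a genuine gap, at the step the paper also skips.} Your argument follows the paper's proof: the same $\tilde V=\sum_i V(z_i-z_\gamma)$, Lemma 2 used in both directions, Assumption 1 (with $\beta=\gamma$ for the leader terms), the bound $\dot W\le -Q^{\top}(G^{\sigma}\otimes\Psi)Q$, and Lemma 1. You correctly locate the weak point: the paper jumps from the separate subinterval estimates \eqref{TH1-7} to the common-window limit \eqref{TH1-8} without justification.

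Your repair does not close it. A bounded derivative only gives $\|Q(t)-Q(t_\kappa)\|\le C(t-t_\kappa)$. On a window of length $s\tau$ (or $T$) this is $O(1)$, not $o(1)$. Uniform continuity makes differences small only on windows whose length shrinks, whereas $t_\kappa^l-t_\kappa\ge l\tau$. So ``summing over $l$ yields $Q(t_\kappa)^{\top}(\sum_l G_{\sigma_l}\otimes\Psi)Q(t_\kappa)\to0$'' does not follow. In fact the ingredients you invoke cannot suffice in principle. Take $n=1$, $N=2$, $q(x)=x$, and let the graph alternate between ``leader--agent~1 only'' and ``agent~1--agent~2 only'', which is jointly connected. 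Let $e_1=e_2=b(t)$, with $b$ a unit bump supported inside each agent~1--agent~2 interval. This function is Lipschitz and has $g\equiv0$, yet it does not tend to $0$.

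What is missing is a \emph{dynamical} fact: the trajectory must become asymptotically frozen on each interval, $\sup_{t\in[t_\kappa,t_{\kappa+1}]}\|e(t)-e(t_\kappa)\|\to0$. When $q(x)=x$ and $h$ is locally Lipschitz, one has $\|\dot e\|^2\le C\,g$ on the compact sublevel set. Then $\int_{t_\kappa}^{t_{\kappa+1}}\|\dot e\|\,dt\le\sqrt{CT\int_{t_\kappa}^{t_{\kappa+1}}g\,dt}\to0$, and your comparison works.

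Under Assumption 1 alone, however, $g$ need not control $\dot e$. Take $V(x)=x^4/4$, $h(x)=x$, $q(x)=x^2$, $\Psi=\frac34$. This satisfies \eqref{A1}, because $(a^3-b^3)(a-b)=(a-b)^2(a^2+ab+b^2)\ge\frac34(a-b)^2(a+b)^2$. Yet with one active edge and $e_1=1$, $e_2=-1$, one has $g=0$ while $\dot e_1=-2$. So you need an extra hypothesis of this type, or a separate limiting-trajectory argument. One such hypothesis is that $\|h(z_i)-h(z_j)\|^2$ and $\|h(z_i)-h(z_\gamma)\|^2$ are bounded by multiples of the corresponding terms of $-\dot W$; this gives $\|\dot e\|^2\le C(-\dot W)$.

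\textbf{A smaller point.} Passing from $Q(t_\kappa)\to0$ to $Q(t)\to0$ via uniform continuity or Lemma 3 has the same defect. This one is easily fixed by monotonicity: $e(t_\kappa)\to0$ gives $W(t_\kappa)\to0$, hence $W(t)\to0$ and $e(t)\to0$.
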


\emph{Proof:} Let $\tilde{V}(t)=\sum_{i=1}^{N}V(z_{i}-z_{\gamma})$, where $V$ satisfies Assumption 1. Taking the derivative of $\tilde{V}(t)$ gives
\begin{align}
\dot{\tilde{V}}(t) &= \sum_{i=1}^{N} \sum_{j \in \mathcal{N}_{i}} \nabla V(z_{i}-z_{\gamma})
                   A_{ij}^{\sigma}(h({z}_{j})-h({z}_{i})) \nonumber\\
                &\ \ \ \ +\sum_{i=1}^{N}d_{i}^{\sigma} \nabla V(z_{i}-z_{\gamma})
                   (h({z}_{\gamma})-h({z}_{i}))  \nonumber\\
                &= -\sum_{i=1}^{N} \sum_{j =1}^{N} \nabla V(z_{i}-z_{\gamma})
                   L_{ij}^{\sigma}h({z}_{j}) \nonumber\\
                &\ \ \ \ -\sum_{i=1}^{N}d_{i}^{\sigma} \nabla V(z_{i}-z_{\gamma})
                   (h({z}_{i})-h({z}_{\gamma}))  \nonumber\\
                &= \sum_{i=1}^{N-1}\sum_{j=i+1}^{N} F_{\nabla V}(z_{i}-z_{\gamma},z_{j}-z_{\gamma}) L_{ij}^{\sigma} F_{h}(z_{i},z_{j}) \nonumber\\
                &\ \ \ \ -\sum_{i=1}^{N}d_{i}^{\sigma} \nabla V(z_{i}-z_{\gamma})
                   F_{h}(z_{i},z_{\gamma}), \label{TH1-1}
\end{align}
where Lemma 2 has been used. It is seen from \eqref{A1} that
\begin{align}
\dot{\tilde{V}}(t) & \leq \sum_{i=1}^{N-1}\sum_{j=i+1}^{N} L_{ij}^{\sigma}
                  H_{q,\Psi}(z_{i}-z_{\gamma},z_{j}-z_{\gamma})  \nonumber\\
                &\ \ \ \ -\sum_{i=1}^{N} d_{i}^{\sigma} q^{\top}(z_{i}-z_{\gamma}) \Psi q(z_{i}-z_{\gamma}) \nonumber\\
                &= - \sum_{i=1}^{N} \sum_{j =1}^{N} L_{ij}^{\sigma} q^{\top}(z_{i}-z_{\gamma}) \Psi q(z_{j}-z_{\gamma}) \nonumber\\
                &\ \ \ \ -\sum_{i=1}^{N} d_{i}^{\sigma} q^{\top}(z_{i}-z_{\gamma}) \Psi q(z_{i}-z_{\gamma}) \nonumber\\
                &= -Q^{\top}(z-z_{\gamma})(G^{\sigma} \otimes \Psi)Q(z-z_{\gamma}), \label{TH1-2}
\end{align}
where $Q(z-z_{\gamma})=[q^{\top}(z_{1}-z_{\gamma}),\ldots,q^{\top}(z_{N}-z_{\gamma})]^{\top}$, and $G^{\sigma}= L^{\sigma}+D^{\sigma}$. Since $G^{\sigma}$ is positive semi-definite, it follows that $\dot{\tilde{V}}(t) \leq 0$. Thus, $\tilde{V}(t)$ is a non-increasing function and has a lower bound. According to Cauchy's convergence criteria, for any $\varepsilon > 0$, there exists $N_{\varepsilon}>0$ such that
\begin{align}
|\tilde{V}(t_{\kappa+1})-\tilde{V}(t_{\kappa})| < \varepsilon, \quad \forall \kappa \geq N_{\varepsilon}  \label{TH1-3}
\end{align}
i.e.,
\begin{align}
|\int_{t_{\kappa}}^{t_{\kappa+1}}\dot{\tilde{V}}(\varpi)d\varpi| < \varepsilon.  \label{TH1-4}
\end{align}
Then, we have
\begin{align}
\int_{t_{\kappa}^{0}}^{t_{\kappa}^{1}}\dot{\tilde{V}}(\varpi)d\varpi +\ldots +\int_{t_{\kappa}^{s_{\kappa}-1}}^{t_{\kappa}^{s_{\kappa}}}\dot{\tilde{V}}(\varpi)d\varpi
> -\varepsilon.  \label{TH1-5}
\end{align}
Since $s_{\kappa}$ is finite, it follows that
\begin{align}
-\varepsilon &< \int_{t_{\kappa}^{i}}^{t_{\kappa}^{i+1}}\dot{\tilde{V}}(\varpi)d\varpi  \nonumber\\
&< -\int_{t_{\kappa}^{i}}^{t_{\kappa}^{i+1}} Q^{\top}(\omega(\varpi))(G^{\sigma(t_{\kappa}^{i})} \otimes \Psi)Q(\omega(\varpi)) d\varpi \nonumber\\
&< -\int_{t_{\kappa}^{i}}^{t_{\kappa}^{i}+\tau} Q^{\top}(\omega(\varpi))(G^{\sigma(t_{\kappa}^{i})} \otimes \Psi)Q(\omega(\varpi)) d\varpi \label{TH1-6}
\end{align}
with $i=0,1,\ldots,s_{\kappa-1}$, where $\omega=[\omega_{1}^{\top},\ldots,\omega_{N}^{\top}]^{\top}$ and $\omega_{i}=z_{i}-z_{\gamma}$ for $i=1,\ldots,N$. Thus, we have
\begin{align}
\int_{t_{\kappa}^{i}}^{t_{\kappa}^{i}+\tau} Q^{\top}(\omega(\varpi))(G^{\sigma(t_{\kappa}^{i})} \otimes \Psi)Q(\omega(\varpi)) d\varpi < \varepsilon. \label{TH1-7}
\end{align}
Hence,
\begin{align}
&\lim_{t\rightarrow \infty} \int_{t}^{t+\tau} Q^{\top}(\omega(\varpi))
(G^{\sigma(t_{\kappa}^{0})}\otimes \Psi + \ldots \nonumber\\
&\quad \quad \quad \quad \quad \quad \quad \quad \quad + G^{\sigma(t_{\kappa}^{s_{\kappa}-1})}\otimes \Psi)Q(\omega(\varpi)) d\varpi \nonumber\\
&=\lim_{t\rightarrow \infty} \int_{t}^{t+\tau} Q^{\top}(\omega(\varpi))
(\sum_{j=0}^{s_{\kappa}-1} G^{\sigma(t_{\kappa}^{j})}\otimes \Psi)Q(\omega(\varpi)) d\varpi \nonumber\\
&=0. \label{TH1-8}
\end{align}
It follows from $\dot{\tilde{V}}(t) \leq 0$ that $\omega(t)$ is bounded so is its derivative $\dot{\omega}(t)$. Thus, we know that $Q^{\top}(\omega(t))
(\sum_{j=0}^{s_{\kappa}-1} G^{\sigma(t_{\kappa}^{j})}$ $\otimes \Psi)Q(\omega(t))$ is uniformly continuous. By Lemma 3, we have $\lim_{t\rightarrow \infty} Q^{\top}(\omega(t))(\sum_{j=0}^{s_{\kappa}-1} G^{\sigma(t_{\kappa}^{j})} \otimes \Psi)Q(\omega(t)) = 0$. Based on Lemma 1, $\sum_{j=0}^{s_{\kappa}-1} G^{\sigma(t_{\kappa}^{j})}$ is positive define because of the joint connectivity of the interconnected graphs during $[t_{\kappa},t_{\kappa+1})$, then we can
conclude that $\lim_{t\rightarrow \infty} Q(\omega(t))=0$, i.e.,
$\omega_{i}(t)\rightarrow 0$ for $i=1,\ldots,N$ as $t\rightarrow \infty$, which indicates that the consensus of system \eqref{MAS} under the jointly connected topology is achieved. The proof is thus completed.  \hfill{$\square$}

On basis of protocol \eqref{Protocol}, we establish the consensus criterion for MAS \eqref{MAS} subjected to the jointly connected topology by constructing polynomial Lyapunov functions. It is noteworthy to mention that we presume the existence of function $V(\cdot)$ in Assumption 1. In the following, we will present our method for automated synthesis of such a function. However, there are few efficient computational methods to synthesize $V(\cdot)$ involving multivariate non-negativity conditions. To the best of our knowledge, even checking the non-negativity of a quartic polynomial is NP-hard \cite{Papachristodoulou:2005}.

The SOS programming provides convex relaxations for a variety of computationally difficult optimization problems, which has been a powerful method to synthesize Lyapunov functions in control theory and engineering (e.g., \cite{Papachristodoulou:2002,Chesi:2013,Zhang:2022,zhang2023stability,zhang2025automatic}). Thus, we can use the SOS relaxations as an alternative for certifying non-negativity because of that an SOS polynomial is completely non-negative.
That is, the non-negativity condition in Assumption 1 can be characterized by the SOS condition. In this way, we can automatically synthesize the polynomial Lyapunov functions to achieve consensus verification via SOS programming, for which the powerful algorithm and off-the-shelf software are available.

\begin{assumption} For $h(\cdot)\in \mathbb{R}[\cdot]:\mathbb{R}^n\rightarrow\mathbb{R}^n$, there exist an SOS polynomial function $V(\cdot):\mathbb{R}^n\rightarrow \mathbb{R}_{+}$, a vector valued function of polynomial $q(\cdot):\mathbb{R}^n\rightarrow\mathbb{R}^m$ with $q(\alpha)=0 \Leftrightarrow \alpha = 0$ such that for $\forall \alpha, \beta, \gamma \in \mathbb{R}^n$, there holds
\begin{align}
&q^{\top}(\alpha) \Psi q(\eta)- \alpha^{2m} \in {\rm SOS}[\alpha], \label{A2-1} \\
&F_{\nabla V}(\alpha-\gamma,\beta-\gamma)F_{h}(\alpha-\beta) \nonumber\\
&\quad \quad -H_{q,\Psi}(\alpha-\gamma,\beta-\gamma)\in {\rm SOS}[\alpha,\beta,\gamma]. \label{A2-2}
\end{align}
\end{assumption}

\begin{proposition}
Assuming that Assumption 2 is valid. If the switching topology graph among the follower and leader agents is jointly connected across each time interval $[t_{\kappa},t_{\kappa+1})$ for $\kappa=0,1,\ldots$, then the consensus of system \eqref{MAS} is achieved under protocol \eqref{Protocol}.
\end{proposition}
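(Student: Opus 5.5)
The plan is to reduce Proposition 1 to Theorem 1: show that any $V$, $q$, $\Psi$ satisfying Assumption 2 also satisfy Assumption 1, and then invoke Theorem 1 directly. The only facts needed are that every SOS polynomial is nonnegative on its whole domain, and that the properties of $V$ required in Assumption 1 (continuous differentiability, positive definiteness, radial unboundedness) can be extracted from the SOS structure together with \eqref{A2-1}.

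First, I would treat the dissipation inequality. Condition \eqref{A2-2} states that
\[
F_{\nabla V}(\alpha-\gamma,\beta-\gamma)F_{h}(\alpha,\beta)-H_{q,\Psi}(\alpha-\gamma,\beta-\gamma)
\]
lies in $\mathrm{SOS}[\alpha,\beta,\gamma]$. Here I read the term $F_h(\alpha-\beta)$ in \eqref{A2-2} as $F_h(\alpha,\beta)$. Since an SOS polynomial is a finite sum $\sum_i p_i^2\ge 0$ for every $(\alpha,\beta,\gamma)\in\mathbb{R}^{3n}$, this expression is pointwise nonnegative. That is exactly \eqref{A1}. The same $q$ (with $q(\alpha)=0\Leftrightarrow\alpha=0$) and the same positive definite $\Psi$ are kept.

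Second, I would verify the regularity of $V$. Since $V$ is a polynomial, it is $C^\infty$, and $V\ge 0$ because it is SOS. For positive definiteness and radial unboundedness I would use \eqref{A2-1}, interpreted as requiring
\[
V(\alpha)-\textstyle\sum_k \alpha_k^{2m}\in\mathrm{SOS}[\alpha]
\]
for some $m\ge1$ (or, read literally, $q^\top(\alpha)\Psi q(\alpha)-\|\alpha\|^{2m}$ SOS, combined with $V$ dominating this quadratic form of $q$). This yields
\[
V(\alpha)\ge \textstyle\sum_k\alpha_k^{2m}\ge c\|\alpha\|^{2m}
\]
for some $c>0$, by equivalence of norms. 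Hence $V(\alpha)>0$ for $\alpha\neq 0$, and $V(\alpha)\to\infty$ as $\|\alpha\|\to\infty$. It remains to check $V(0)=0$. If this is not automatic from the SOS parametrization, I would note that the proof of Theorem 1 uses only differences and gradients of $V$, so replacing $V$ by $V-V(0)$ changes nothing in \eqref{TH1-1}--\eqref{TH1-8}. The shifted function is still bounded below by $c\|\alpha\|^{2m}-|V(0)|$, which suffices for the boundedness argument there.

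With \eqref{A1} and the required properties of $V$ established, Assumption 1 holds, and Theorem 1 gives consensus under the joint connectivity hypothesis. The main obstacle is the second step: \eqref{A2-1} as printed is ambiguous (the symbol $\eta$ and the scalar power $\alpha^{2m}$ are undefined). I would fix the interpretation that it encodes a lower bound $V(\alpha)\ge\|\alpha\|_{2m}^{2m}$, or an analogous coercivity bound, since that is precisely what is needed to upgrade SOS nonnegativity to positive definiteness and radial unboundedness.
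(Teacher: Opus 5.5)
Your overall route is the paper's own implicit argument, and your first step is fine: SOS implies pointwise nonnegativity, so \eqref{A2-2} yields \eqref{A1}, and Theorem 1 then applies. The problems are in your second step. You have given \eqref{A2-1} a role it does not have. It involves only $q$ and $\Psi$, not $V$, and the paper (Remark 2) uses it to certify that $\Psi$ is positive definite. Assumption 2 does not otherwise provide that, and you simply assume it when you keep ``the same positive definite $\Psi$''. Reading \eqref{A2-1} as $V(\alpha)-\sum_k\alpha_k^{2m}\in{\rm SOS}[\alpha]$, or adding that $V$ dominates $q^{\top}\Psi q$, is a new hypothesis on $V$, not a consequence of Assumption 2. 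You are right that Assumption 2 does not explicitly deliver the coercivity of $V$ that Theorem 1 uses; the paper passes over this too. But you should state it as an extra standing requirement rather than claim to extract it from \eqref{A2-1}.

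More substantively, even granting $V(\alpha)\ge c\|\alpha\|^{2m}$, shifting by $V(0)$ does not make the proof of Theorem 1 go through. The leader term in \eqref{TH1-1}--\eqref{TH1-2} is bounded using \eqref{A1} with $\beta=\gamma$. That only gives $[\nabla V(\omega_i)-\nabla V(0)]F_h(z_i,z_\gamma)\ge q^{\top}(\omega_i)\Psi q(\omega_i)$, so the proof needs $\nabla V(0)=0$. Positive definiteness in the strict sense ($V(0)=0$) would supply this, but a coercive SOS polynomial with $V(0)>0$ need not. Take $n=1$, $h(z)=q(z)=z$, $\Psi=1$, $m=1$ and $V(\alpha)=\alpha^2+(\alpha-1)^2$. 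All conditions hold in your reading, with SOS remainders $(\alpha-1)^2$ and $3(\alpha-\beta)^2$. Yet for a single follower linked to the leader with $d_1=1$ you get $\dot{\tilde{V}}=-(4\omega-2)\omega>0$ for $\omega\in(0,\tfrac12)$. The repair is to replace $V$ by $V(\alpha)-V(0)-\nabla V(0)\alpha$. Subtracting an affine function leaves $F_{\nabla V}$, hence \eqref{A2-2}, unchanged; it makes the gradient vanish at the origin; and it preserves radial unboundedness since $2m\ge 2$. With that function, and with coercivity of $V$ and $\Psi$ positive definite in hand, the proof of Theorem 1 goes through unchanged.
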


\begin{remark}
The SOS condition \eqref{A2-1} guarantees $\Psi$ is positive definite. Note that $q(\cdot)$ and $\Psi$ are both unknown in \eqref{A2-1} and \eqref{A2-2}. So we presume that $\hat{q}(\cdot)$ is a vector valued
function of monomial, and let $q^{\top}\Psi q=\hat{q}^{\top}\hat{\Psi}\hat{q}$.
Then, we only need to solve the positive definite matrix $\hat{\Psi}$ by SOS programming.
Moreover, by constructing $\hat{q}(\alpha)$ to include the linear monomials of $\alpha$, one can readily ensure that $\hat{q}(\alpha) = 0 \Leftrightarrow \alpha = 0$.
\end{remark}

\begin{remark}
Although \eqref{A2-2} still relates to a bilinear problem and thus is NP-hard \cite{Toker:1995}, it can be solved by existing solvers such as PENBMI \cite{Henrion:2005}. Besides, in practice, engineering insights can facilitate the construction of the polynomial coupling function $h(\cdot)$. We can empirically provide initial parameter values of $h(\cdot)$ in advance to avoid the bilinear problem, which eventually falls within the framework of convex optimization. It is known for being computationally tractable, which means that the optimization problem can be addressed efficiently in polynomial time.

In this article, we use YALMIP toolbox \cite{Johan:2004} and MOSEK solver \cite{aps2019mosek} to synthesize the desired polynomial Lyapunov function $V(\cdot)$, which is an efficient optimization software to solve the aforementioned SOS programming problem.
\end{remark}

\subsection{Consensus Verification for Second-order MASs}
\label{3-2}
In subsection \ref{3-1}, the distributed nonlinear control protocol \eqref{Protocol} with the position states $z_{i}$ is designed to discuss the first-order consensus problem of MAS \eqref{MAS} with single integrator dynamics. However, in most of the applications of MASs, such as UAVs formation \cite{Liu:2018,Gruszka:2013}, the reference trajectory $z_{\gamma}$ is time-varying, and thus the information of velocity states is critical. For this purpose, we will extend the SOS programming approach to investigate the second-order MASs with double integrator dynamics in this subsection.

We consider that only some, but not all, of the follower agents are received the position and velocity information of the virtual leader. Then, the distributed nonlinear control protocol is designed as follows:
\begin{align}
u_{i}=&\sum_{j \in \mathcal{N}_{i}}A_{ij}^{\sigma}h_{1}({z}_{j}-{z}_{i}) + \sum_{j \in \mathcal{N}_{i}}A_{ij}^{\sigma}(h_{2}({v}_{j})-h_{2}({v}_{i})) \nonumber\\ &+d_{i}^{\sigma}h_{1}({z}_{\gamma}-{z}_{i})
+d_{i}^{\sigma}(h_{2}({v}_{\gamma})-h_{2}({v}_{i}))
\label{Protocol2}
\end{align}
with $i=1,\ldots,N$, where $z_{i} \in \mathbb{R}^{n}$ and $v_{i} \in \mathbb{R}^{n}$ are the position and velocity states of agent $i$; $h_{1}(\cdot)\in \mathbb{R}^{n} \rightarrow \mathbb{R}^{n}$ and $h_{2}(\cdot)\in \mathbb{R}^{n} \rightarrow \mathbb{R}^{n}$ are both polynomial coupling functions, which satisfies $h_{1}(-a)=-h_{1}(a)$ for $\forall a \in \mathbb{R}^{n}$; ${z}_{\gamma}$ and ${v}_{\gamma}$ are the reference states of position and velocity generated by the virtual leader.

In protocol \eqref{Protocol2}, two different types of nonlinear couplings are considered according to the way of information acquirement. (i) The MASs are equipped with active sensors such as radar and sonar that can directly detect the relative states. Mathematically, we use the coupling function $h_{1}(\cdot)$ to describe the information of relative state couplings for position. (ii) Neighboring agents may also send their absolute information via broadcast. Thus, we use the coupling function $h_{2}(\cdot)$ to describe the information of absolute state couplings for velocity.
In both cases, we only use the relative information exchange among the neighboring agents, and thus protocol \eqref{Protocol2} is said to be the distributed nonlinear control protocol. Note that the designed protocol \eqref{Protocol2} is easier to compute than those in \cite{Olfati-Saber:2006} and \cite{Su:2009}, where a nonlinear gradient term derived from a complicated artificial potential function appears.
Moreover, if $h_{1}(\cdot)$ and $h_{2}(\cdot)$ are linear coupling functions, then protocol \eqref{Protocol2} becomes the classical linear control law in most literature (e.g., \cite{Su:2019,Qin:2016,Difilippo:2021}).

Defining the state errors of position and velocity as $\omega_{i}=z_{i}-z_{\gamma}$ and $\nu_{i}=v_{i}-v_{\gamma}$ for $i=1,\ldots,N$, the error dynamics of each agent in second-order MASs are defined as 
\begin{align}
\left\{\begin{aligned}
&\dot{\omega}_{i} =  h_{1}(\nu_{i}) \\
&\dot{\nu}_{i}     =\sum_{j \in \mathcal{N}_{i}}A_{ij}^{\sigma}h_{1}({\omega}_{j}-{\omega}_{i}) +d_{i}^{\sigma}h_{1}({\omega}_{\gamma}-{\omega}_{i}) \\
&+ \sum_{j \in \mathcal{N}_{i}}A_{ij}^{\sigma}(h_{2}({v}_{j})-h_{2}({v}_{i}))
+d_{i}^{\sigma}(h_{2}({v}_{\gamma})-h_{2}({v}_{i})).
\end{aligned}
\right. \label{MAS2}
\end{align}

The consensus objective for system \eqref{MAS2} is to guarantee the state errors of position and velocity both asymptotically converge to zero, i.e., $\lim_{t \rightarrow \infty} ||\omega_{i}||=0$ and $\lim_{t \rightarrow \infty} ||\nu_{i}||=0$ for $i=1,\ldots,N$.

In order to achieve the above control objective, the following assumption about $h_{1}(\cdot)$ and $h_{2}(\cdot)$ is used.

\begin{assumption} For $h_{1} \in \mathbb{R}[\cdot]: \mathbb{R}^n\rightarrow\mathbb{R}^n$ and $h_{2}\in \mathbb{R}[\cdot]: \mathbb{R}^n\rightarrow\mathbb{R}^n$, 
there exist a continuously differentiable, radially unbounded, positive definite polynomial function $V(\cdot):\mathbb{R}^n\rightarrow\mathbb{R}_{+}$, a vector valued function of polynomial $q(\cdot):\mathbb{R}^n\rightarrow\mathbb{R}^m$ with $q(\alpha)=0 \Leftrightarrow \alpha = 0$ such that for $\forall \alpha, \beta, \gamma \in \mathbb{R}^n$, there holds
\begin{align}
\nabla V (\alpha) h_{1}(\beta) &= \nabla V (\beta) h_{1}(\alpha), \label{A3-2} \\
 F_{\nabla V}(\alpha-\gamma,\beta-\gamma) & F_{h_{2}}(\alpha,\beta)  \geq  H_{q,\Psi}(\alpha-\gamma,\beta-\gamma). \label{A3-1}
\end{align}
\end{assumption}

\begin{remark}
For \eqref{A3-2}, it is not difficult to validate that the degree of $\nabla V(z)$ equals to the one of $h_{1}(z)$, i.e., if $V(\cdot) \in \mathbb{R}_{2m}[z]$, then $h_{1}(\cdot) \in \mathbb{R}_{2m-1}[z]$. For convenience, we can choose $h_{1}(\cdot)=[\nabla V(\cdot)]^{\top}$. In this case, $V(\cdot)$ is an even function due to the property of odd function $h_{1}(\cdot)$. And the condition \eqref{A3-1} is similar to the condition \eqref{A1}.
\end{remark}

\begin{theorem}
Assuming that Assumption 3 is valid. If the switching topology graph among the follower and leader agents is jointly connected across each time interval $[t_{\kappa},t_{\kappa+1})$ for $\kappa=0,1,\ldots$, then the consensus of system \eqref{MAS2} with second-order dynamics is achieved under protocol \eqref{Protocol2}.
\end{theorem}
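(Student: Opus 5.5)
The plan is to mimic the proof of Theorem 1 with a composite Lyapunov function that couples position and velocity errors. The error dynamics \eqref{MAS2} as written have $\dot{\omega}_i = h_1(\nu_i)$. This is an odd feature: with the usual double integrator one would have $\dot\omega_i=\nu_i$. I will follow the paper's model, since with $h_1=\nabla V^\top$ this is the natural choice. The candidate is
\[
\tilde{W}(t)=\sum_{i=1}^{N}V(\nu_i)+\frac{1}{2}\sum_{i=1}^{N}\sum_{j=1}^{N}A_{ij}^{\sigma}\,\Phi(\omega_j-\omega_i)+\sum_{i=1}^{N}d_i^{\sigma}\,\Phi(\omega_i),
\]
where $\Phi$ is a potential with $\nabla\Phi=h_1^{\top}$. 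Its existence follows from \eqref{A3-2}: that condition forces $h_1$ to be proportional to $\nabla V^\top$, so one may take $\Phi=cV$. Since $h_1$ is odd, $V$ is even.

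\textbf{Derivative.} I will differentiate $\tilde{W}$ on each subinterval $[t_\kappa^l,t_\kappa^{l+1})$, where $\sigma$ is constant. The potential terms produce
\[
\sum_i\sum_j A_{ij}^\sigma h_1^\top(\omega_j-\omega_i)\bigl(h_1(\nu_j)-h_1(\nu_i)\bigr)\quad\text{and}\quad\sum_i d_i^\sigma h_1^\top(\omega_i)h_1(\nu_i).
\]
The velocity part produces $\sum_i \nabla V(\nu_i)\dot\nu_i$. Using \eqref{A3-2} to swap the arguments of $\nabla V$ and $h_1$, and the oddness of $h_1$ (so that $h_1(\omega_\gamma-\omega_i)=-h_1(\omega_i)$ with $\omega_\gamma=0$), the position-coupling terms in $\nabla V(\nu_i)\dot\nu_i$ cancel exactly against the potential derivative. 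Writing $h_2(v_j)-h_2(v_i)=F_{h_2}(v_j,v_i)$ and applying Lemma 2 with $\gamma=v_\gamma$, the remaining velocity terms become
\[
\sum_{i<j}L_{ij}^\sigma F_{\nabla V}(\nu_i,\nu_j)F_{h_2}(v_i,v_j)-\sum_i d_i^\sigma\nabla V(\nu_i)F_{h_2}(v_i,v_\gamma).
\]
By \eqref{A3-1}, exactly as in \eqref{TH1-2}, this is bounded by $-Q^\top(\nu)(G^\sigma\otimes\Psi)Q(\nu)\le 0$.

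\textbf{Switching instants.} The hardest point is that $\tilde{W}$ depends on $\sigma$ through the potential terms, so it may jump when the topology switches. I would try first to avoid this by replacing the switching potential with a $\sigma$-independent function. If a switch cannot be shown harmless, a fallback is the following: use boundedness of trajectories (from the velocity part) and argue on $\sum_i V(\nu_i)$ combined with the integral argument.

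\textbf{Convergence.} Given monotonicity of $\tilde W$, I then repeat \eqref{TH1-3}--\eqref{TH1-8} verbatim with $\nu$ in place of $\omega$. Boundedness gives uniform continuity, so Lemma 3 and Lemma 1 yield $Q(\nu)\to0$, hence $\nu_i\to0$.

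To get $\omega_i\to0$, I apply Lemma 3 (Barbalat) to $\dot\nu_i$: it is uniformly continuous because the states are bounded, and $\nu_i$ converges. So $\dot\nu_i\to0$, which gives $\sum_j A_{ij}^{\sigma}h_1(\omega_j-\omega_i)-d_i^\sigma h_1(\omega_i)\to0$ on each dwell interval. Summing over a joint-connectivity interval, with the jointly connected union containing the leader, I would conclude that all relative $h_1$-differences vanish. Then $\omega_i\to0$ follows from $q$-type definiteness, using $h_1=c\nabla V^\top$, which vanishes only at $0$ because $V$ is positive definite and even.
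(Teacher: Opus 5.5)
There is a genuine error in your derivative step, and the switching issue you flag is left open.

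\textbf{The position potential.} Condition \eqref{A3-2} does not force $h_1\propto\nabla V^{\top}$ when $n\ge 2$. Any $h_1=S\nabla V^{\top}$ with $S$ constant and symmetric satisfies it, and such an $h_1$ need not be a gradient field. For instance, take $V=x_1^4+x_2^2$ and $h_1=[2x_2,\,4x_1^3]^{\top}$, whose Jacobian is not symmetric. Together with $h_2(v)=v$, $q=[x_1^2,\,x_2]^{\top}$ and $\Psi=\text{diag}\{1,2\}$, this satisfies all of Assumption 3, yet no $\Phi$ with $\nabla\Phi=h_1^{\top}$ exists. Even when $h_1=c\nabla V^{\top}$, your choice $\Phi=cV$ does not work. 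It gives potential terms $c^2\nabla V(\omega_j-\omega_i)\nabla V^{\top}(\nu_i)$, while $\sum_i\nabla V(\nu_i)\dot\nu_i$ gives coupling terms $c\,\nabla V(\nu_i)\nabla V^{\top}(\omega_j-\omega_i)$, and these cancel only for $c=1$.

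The swap in \eqref{A3-2} is tailored to a position potential equal to $V$ itself, not a primitive of $h_1$, and that is exactly the paper's construction. Take $V_1=\frac12\sum_i\sum_j A_{ij}^{\sigma}V(\omega_j-\omega_i)$ and $V_2=\sum_i d_i^{\sigma}V(\omega_i)$. Symmetry of $A^{\sigma}$ and oddness of $\nabla V$ give $\dot V_1=-\sum_i\sum_j A_{ij}^{\sigma}\nabla V(\omega_j-\omega_i)h_1(\nu_i)$ and $\dot V_2=\sum_i d_i^{\sigma}\nabla V(\omega_i)h_1(\nu_i)$. Then \eqref{A3-2} turns these into exactly the negatives of the position-coupling terms in $\sum_i\nabla V(\nu_i)\dot\nu_i$. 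No potential for $h_1$ is needed.

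\textbf{Switching and the position step.} You are right that $V_1,V_2$ depend on $\sigma$ and can jump upward at switching instants. The paper's proof does not address this either: it differentiates $\bar V$ between switches and then appeals to Theorem 1. But neither of your remedies closes the gap.
A $\sigma$-independent potential $P(\omega)$ has derivative $\sum_i\nabla_{\omega_i}P\,h_1(\nu_i)$, which cannot cancel the mode-dependent coupling terms in every mode.
Also, $\sum_i V(\nu_i)$ alone is not monotone, since its derivative still contains those terms.
So you have no nonincreasing quantity, hence no boundedness, and your uniform-continuity and Lemma 3 steps are unsupported.

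In the last step, you claim $h_1=c\nabla V^{\top}$ vanishes only at $0$ \emph{because} $V$ is positive definite and even; that reasoning is false. For example, $x^2\bigl((x^2-1)^2+\tfrac{1}{10}\bigr)$ is positive definite and even but has nonzero critical points. The correct source is \eqref{A3-1} with $\beta=\gamma$, whose right-hand side $q^{\top}(\alpha-\gamma)\Psi q(\alpha-\gamma)$ is positive for $\alpha\neq\gamma$.

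Finally, you assert rather than prove the passage from $\sum_j\bar A_{ij}h_1(\omega_j-\omega_i)-\bar d_i h_1(\omega_i)\approx 0$ (summed weights over a connectivity interval) to $\omega\to0$. For $n\ge2$ this needs a sign or monotonicity property such as $x^{\top}h_1(x)>0$, which you never derive. The paper's one-line ``it follows from \eqref{MAS2}'' does not supply it either.
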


\emph{Proof:} Let $V_{1}=\frac{1}{2}\sum_{i=1}^{N}\sum_{j \in \mathcal{N}_{i}}A_{ij}^{\sigma}V(\omega_{j}-\omega_{i})$, $V_{2}=\sum_{i=1}^{N}d_{i}^{\sigma}V(\omega_{i})$ and $V_{3}=\sum_{i=1}^{N}V(\nu_{i})$, where $V$ satisfies Assumption 3. Define $\bar{V}=V_{1}+V_{2}+V_{3}$. Taking the derivative of $\bar{V}$ yields
\begin{align}
\dot{V}_{1} &= \frac{1}{2}\sum_{i=1}^{N} \sum_{j \in \mathcal{N}_{i}} \nabla V(\omega_{j}-\omega_{i})A_{ij}^{\sigma}(h_{1}(\nu_{j})-h_{1}(\nu_{i}))  \nonumber\\
&= -\sum_{i=1}^{N} \sum_{j \in \mathcal{N}_{i}} \nabla V(\omega_{j}-\omega_{i})A_{ij}^{\sigma}h_{1}(\nu_{i}), \label{TH2-1}
\end{align}
and
\begin{align}
\dot{V}_{2} = \sum_{i=1}^{N}d_{i}^{\sigma} \nabla V(\omega_{i}) h_{1}({\nu}_{i}),  \label{TH2-2}
\end{align}
and
\begin{align}
\dot{V}_{3}     &= \sum_{i=1}^{N} \sum_{j \in \mathcal{N}_{i}} \nabla
V(\nu_{i})A_{ij}^{\sigma}h_{1}(\omega_{j}-\omega_{i}) \nonumber\\
                &\ \ \ - \sum_{i=1}^{N} d_{i}^{\sigma} \nabla V(\nu_{i}) h_{1}(\omega_{i})  \nonumber\\
                &\ \ \ + \sum_{i=1}^{N} \sum_{j \in \mathcal{N}_{i}} \nabla V(\nu_{i})A_{ij}^{\sigma}(h_{2}(v_{j})-h_{2}(v_{i})) \nonumber\\
                &\ \ \ + \sum_{i=1}^{N} d_{i}^{\sigma} \nabla V(\nu_{i}) (h_{2}(v_{\gamma})-h_{2}(v_{i})). \label{TH2-3}
\end{align}
According to \eqref{A3-2}, we have
\begin{align}
\dot{\bar{V}}   &=\dot{V}_{1} + \dot{V}_{2} + \dot{V}_{3}  \nonumber\\
                &=\sum_{i=1}^{N} \sum_{j \in \mathcal{N}_{i}} \nabla V(\nu_{i}) A_{ij}^{\sigma} (h_{2}(v_{j})-h_{2}(v_{i})) \nonumber\\
                &\ \ \ \ + \sum_{i=1}^{N} d_{i}^{\sigma} \nabla V(\nu_{i}) (h_{2}(v_{\gamma})-h_{2}(v_{i})) \nonumber\\
                &=-\sum_{i=1}^{N} \sum_{j =1}^{N} \nabla V(\nu_{i}) L_{ij}^{\sigma}h_{2}(v_{j}) \nonumber\\
                &\ \ \ \ - \sum_{i=1}^{N} d_{i}^{\sigma} \nabla V(\nu_{i}) (h_{2}(v_{i})-h_{2}(v_{\gamma})). \nonumber\\
                &= \sum_{i=1}^{N-1}\sum_{j=i+1}^{N} F_{\nabla V}(\nu_{i},\nu_{j}) L_{ij}^{\sigma} F_{h_{2}}(v_{i},v_{j}) \nonumber\\
                &\ \ \ \ -\sum_{i=1}^{N}d_{i}^{\sigma} \nabla V(\nu_{i})
                   F_{h_{2}}(v_{i},v_{\gamma}). \label{TH2-4}
\end{align}
It is seen from \eqref{A3-1} that
\begin{align}
\dot{\bar{V}} & \leq \sum_{i=1}^{N-1}\sum_{j=i+1}^{N} L_{ij}^{\sigma}
                  H_{q,\Psi}(\nu_{i},\nu_{j}) -\sum_{i=1}^{N} d_{i}^{\sigma} q^{\top}(\nu_{i}) \Psi q(\nu_{i}) \nonumber\\
                &= - \sum_{i=1}^{N} \sum_{j =1}^{N} L_{ij}^{\sigma} q^{\top}(\nu_{i}) \Psi q(\nu_{j}) -\sum_{i=1}^{N} d_{i}^{\sigma} q^{\top}(\nu_{i}) \Psi q(\nu_{i}) \nonumber\\
                &= -Q^{\top}(\nu)(G^{\sigma} \otimes \Psi)Q(\nu) \leq 0, \label{TH2-5}
\end{align}
where $Q(\nu)=[q^{\top}(\nu_{1}),\ldots,q^{\top}(\nu_{N})]^{\top}$. Similar to the proof of Theorem 1, we can prove that the velocity tracking errors of all the agents approach to zero. Furthermore, it follows from \eqref{MAS2} that the position tracking errors of all the agents also approach to zero, which indicates that the consensus of system \eqref{MAS2} under the jointly connected topology is achieved. The proof is thus completed.  \hfill{$\square$}

Afterwards, Assumption 4 and Proposition 2 are obtained as follows.

\begin{assumption} For $h_{1}(\cdot)\in \mathbb{R}[\cdot]:\mathbb{R}^n\rightarrow\mathbb{R}^n$ and $h_{2}(\cdot)\in \mathbb{R}[\cdot]:\mathbb{R}^n\rightarrow\mathbb{R}^n$, there exist an SOS polynomial function $V(\cdot):\mathbb{R}^n\rightarrow \mathbb{R}_{+}$, a vector valued function of polynomial $q(\cdot):\mathbb{R}^n\rightarrow\mathbb{R}^m$ with $q(\alpha)=0 \Leftrightarrow \alpha = 0$ such that for $\forall \alpha, \beta, \gamma \in \mathbb{R}^n$, there holds
\begin{align}
&q^{\top}(\alpha) \Psi q(\eta)- \alpha^{2m} \in {\rm SOS}[\alpha], \label{A4-1} \\
&\nabla V (\alpha) h_{1}(\beta)- \nabla V (\beta) h_{1}(\alpha)=0, \label{A4-2} \\
& F_{\nabla V}(\alpha-\gamma,\beta-\gamma)F_{h_{2}}(\alpha,\beta) \nonumber\\
&\quad \quad -H_{q,\Psi}(\alpha-\gamma,\beta-\gamma) \in {\rm SOS}[\alpha,\beta,\gamma]. \label{A4-3}
\end{align}
\end{assumption}

\begin{proposition}
Assuming that Assumption 4 is valid. If the switching topology graph among the follower and leader agents is jointly connected across each time interval $[t_{\kappa},t_{\kappa+1})$ for $\kappa=0,1,\ldots$, then the consensus of system \eqref{MAS2} with second-order dynamics is achieved under protocol \eqref{Protocol2}.
\end{proposition}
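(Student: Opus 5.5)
The plan is to reduce Proposition 2 to Theorem 2. I will show that Assumption 4 implies Assumption 3, so that the conclusion follows verbatim from Theorem 2.

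\textbf{Step 1: the Lyapunov candidate.} Let $V$ and $q$ be given by Assumption 4.
\begin{itemize}
\item Since $V$ is an SOS polynomial, it is nonnegative, and it is continuously differentiable because it is a polynomial.
\item For positive definiteness and radial unboundedness, I would use the normalization implicit in the SOS framework. I would require $V$ to dominate a positive definite form, e.g.\ $V(\alpha)-\epsilon\sum_k \alpha_k^{2m}\in{\rm SOS}[\alpha]$, as in the construction of Lyapunov functions via SOS in \cite{Papachristodoulou:2002}. This gives $V(\alpha)\ge \epsilon\sum_k\alpha_k^{2m}$, which is positive definite and radially unbounded.
\item If the paper only intends $V$ to be SOS, I would note that this extra condition is the standard one and is tacitly assumed.
\end{itemize}
I expect this to be the main obstacle, since the bare statement of Assumption 4 does not literally contain it.

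\textbf{Step 2: the matrix $\Psi$.} From \eqref{A4-1}, read as $q^{\top}(\alpha)\Psi q(\alpha)-\alpha^{2m}\in{\rm SOS}[\alpha]$, we get $q^{\top}(\alpha)\Psi q(\alpha)\ge \|\alpha\|^{2m}_{2m}>0$ for $\alpha\neq0$. Following Remark 3, I would parametrize $q$ by monomials including the linear ones. Then positivity of the quadratic form on the image of $q$ yields the positive definiteness of $\Psi$ needed in Assumption 3, and also $q(\alpha)=0\Leftrightarrow\alpha=0$.

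\textbf{Step 3: the remaining conditions.} Condition \eqref{A4-2} is exactly the polynomial identity \eqref{A3-2}. For \eqref{A3-1}, membership in ${\rm SOS}[\alpha,\beta,\gamma]$ in \eqref{A4-3} means
\[F_{\nabla V}(\alpha-\gamma,\beta-\gamma)F_{h_{2}}(\alpha,\beta)-H_{q,\Psi}(\alpha-\gamma,\beta-\gamma)=\sum_i p_i^2(\alpha,\beta,\gamma)\ge0\]
for all $\alpha,\beta,\gamma$, which is precisely \eqref{A3-1}.

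\textbf{Conclusion.} All hypotheses of Assumption 3 hold, and the joint connectivity hypothesis is identical to that of Theorem 2. Invoking Theorem 2 therefore gives $\omega_i\to0$ and $\nu_i\to0$ for system \eqref{MAS2} under protocol \eqref{Protocol2}, which completes the argument.
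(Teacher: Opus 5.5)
Your overall route is the one the paper intends: SOS implies pointwise nonnegativity, so Assumption 4 yields Assumption 3 and Theorem 2 applies verbatim. The paper itself offers no separate argument for Proposition 2. Your Step 1 caveat is well placed. Definiteness and radial unboundedness of $V$ are used in the boundedness step inherited from the proof of Theorem 1, and they do not follow from $V$ being SOS alone. An explicit normalization such as $V(\alpha)-\epsilon\sum_k\alpha_k^{2m}\in{\rm SOS}[\alpha]$ is a reasonable way to supply them. Step 3 is fine.

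The genuine gap is Step 2. Positivity of the polynomial $q^{\top}(\alpha)\Psi q(\alpha)$, even with the margin in \eqref{A4-1}, does not make the matrix $\Psi$ positive definite, because Gram representations in a monomial basis are not unique. Take the paper's own $\hat q=[x_1,x_2,x_1x_2,x_1^2,x_2^2]^{\top}$ and set $\Psi_{11}=\Psi_{22}=\Psi_{44}=\Psi_{55}=1$, $\Psi_{33}=-1$, $\Psi_{45}=\Psi_{54}=\frac{1}{2}$, with all other entries zero. Then $\hat q^{\top}\Psi\hat q=x_1^2+x_2^2+x_1^4+x_2^4$, so \eqref{A4-1} holds under your reading, yet $\Psi$ has the eigenvalue $-1$.

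This matters because Theorem 2 uses the matrix $\Psi$ itself, not just the polynomial $q^{\top}\Psi q$, in two places. First, $H_{q,\Psi}(\nu_i,\nu_j)$ evaluates the form at $q(\nu_i)-q(\nu_j)$, which generally lies outside the image of $q$. In the example, $H_{q,\Psi}((1,2)^{\top},(-1,2)^{\top})=-12$, so $\dot{\bar V}\le 0$ is lost. Second, $Q(\nu)\to0$ is extracted from $Q^{\top}(\sum_j G^{\sigma(t_{\kappa}^{j})}\otimes\Psi)Q\to0$, which needs $\Psi>0$. The remark after Proposition 1 asserts the same implication, but under your reading of \eqref{A4-1} it is false.

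To close the gap, you have two options. You can make $\Psi>0$ an explicit hypothesis, i.e., an LMI constraint in the SOS program, which is how $\hat\Psi$ is actually sought. Alternatively, read \eqref{A4-1} as a condition in an independent variable $\eta\in\mathbb{R}^m$, e.g.\ $\eta^{\top}\Psi\eta-\epsilon\,\eta^{\top}\eta\in{\rm SOS}[\eta]$, which is equivalent to $\Psi-\epsilon I_m$ being positive semidefinite. Once $\Psi>0$ is in hand, the linear monomials in $q$ (with no constant term) give $q(\alpha)=0\Leftrightarrow\alpha=0$, and your reduction to Theorem 2 goes through.
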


\section{Simulation}
\label{sim}
In this section, we present an example to demonstrate the effectiveness of our method.

Consider a second-order MAS consisting of a virtual leader (labeled by $0$) and four followers agents, each of which is a $2$-dimensional dynamical system. 
The switching graphs $\mathscr{G} \in \{\mathscr{G}_{1},\mathscr{G}_{2},\mathscr{G}_{3}\}$ for information communication among the leader and followers are shown in Fig. \ref{fig1}, and the switching frequency is $1000$ HZ for $\{\mathscr{G}_{1},\mathscr{G}_{2},\mathscr{G}_{3}\}$.
Note that each interaction graph may be not connected, while the union of graphs is connected, i.e., $\{\mathscr{G}_{1},\mathscr{G}_{2},\mathscr{G}_{3}\}$ is jointly connected.

Choose $\text{deg}(V)=4$, $\text{deg}(h_{1})=3$ and $\text{deg}(h_{2})=3$, where $\text{deg}(V)$ denotes the degree of $V(\cdot)$.
Let $\text{deg}(\hat{q})=2$ with
$\hat{q}=[x_{1}, x_{2}, x_{1}x_{2}, x_{1}^{2}, x_{2}^{2}]^{\top} \in \mathbb{R}^{5}$.
By solving the conditions \eqref{A4-1}, \eqref{A4-2} and \eqref{A4-3} under the MOSEK solver, the polynomial coupling functions are obtained for the distributed control protocol \eqref{Protocol2}, where  $h_{1}(z_{i})=[5.1181 z_{i1}^3 + 0.0737 z_{i1} z_{i2}^2 + 5.2307 z_{i1} - 0.0002 z_{i2}, 5.6633 z_{i2}^3 + 0.0737 z_{i1}^2 z_{i2} + 6.3900 z_{i2}- 0.0002 z_{i1}]^{\top}$ and 
$h_{2}(v_{i})=[9.5154 v_{i1}^3 + 1.7914 v_{i1} v_{i2}^2 + 6.0926 v_{i1}, 9.4734 v_{i2}^3 + 1.6446 v_{i1}^2 v_{i2} + 5.5437 v_{i2}]^{\top}$, 
and a quartic polynomial Lyapunov function is automatically synthesized to achieve consensus verification, where
$V(x_{1},x_{2})=1.2795 x_1^4 + 0.0369 x_1^2 x_2^2 + 1.4158 x_2^4 + 2.6153 x_1^2 - 0.0002 x_1 x_2 + 3.1950 x_2^2 + 1.8135$.
Note that if $\text{deg}(V)=2$, the quadratic Lyapunov functions cannot be found.

Given the initial conditions of the leader and followers as $z_{r}(0) = [1,1.5]^{\top}$, $z_{1}(0)=[0.1,1.0]^{\top}$, $z_{2}(0)=[0.4,0.5]^{\top}$, $z_{3}(0)=[0.5,1.2]^{\top}$, $z_{4}(0)=[0.8,2.0]^{\top}$, 
$v_{r}(0) = [0,0.5]^{\top}$, $v_{1}(0)=[0.1,0.2]^{\top}$, $v_{2}(0)=[0.3,0.4]^{\top}$, $v_{3}(0)=[0.5,0.6]^{\top}$, and $v_{4}(0)=[0.7,0.8]^{\top}$, the simulation results under the proposed protocol \eqref{Protocol2} show that both the velocity tracking error norm and the position tracking error norm of all the agents in the error system \eqref{MAS2} converge to zero (see Fig.~\ref{fig2}), confirming the theoretical and algorithmic results.

\begin{figure}[t]
\centering{\includegraphics[width=3.5in]{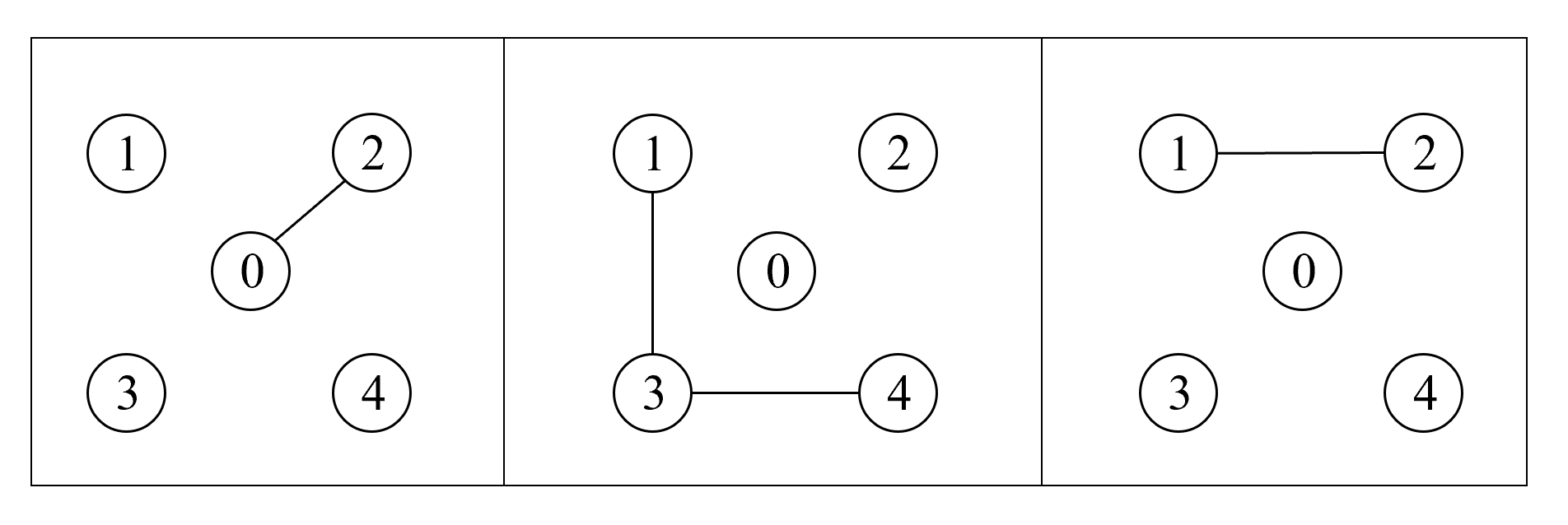}}
\caption{Switching interaction graphs.}
\label{fig1}
\end{figure}

\begin{figure}[t]
\centering{\includegraphics[width=3.2in]{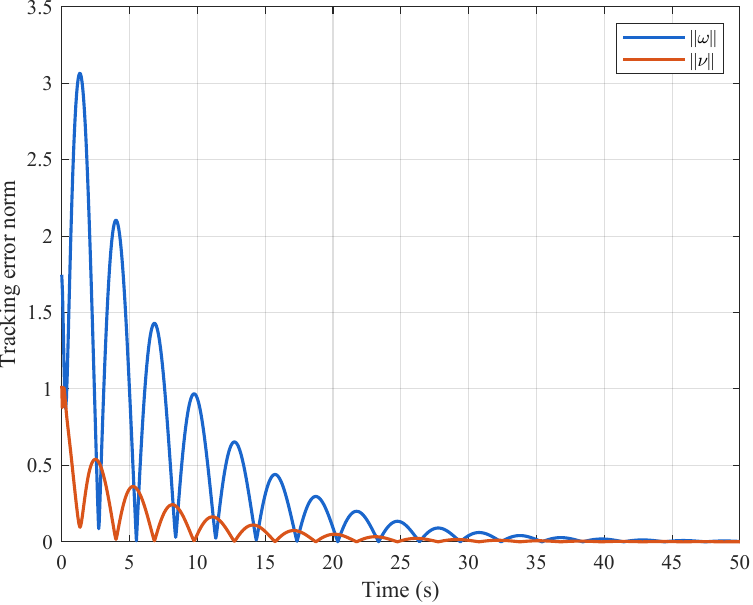}}
\caption{Time evolution of tracking error norm. Orange and blue curves correspond to velocity and position, respectively.}
\label{fig2}
\end{figure}

\section{Conclusion}
\label{4}
In this article, we investigated the consensus verification of MASs under the jointly connected topology subject to the distributed nonlinear control protocols.
Via the SOS decomposition for multivariate polynomials, we proposed a method to automatically synthesize the polynomial Lyapunov functions and polynomial coupling functions for our proposed protocols, to achieve consensus verification, regardless of the widely used quadratic Lyapunov functions might be non-existent.
Moreover, the proposed method was extended to address the second-order consensus of MASs.
Finally, a numerical example is given to demonstrate the performance of our approach.

In the future, building on our previous work \cite{Zhang:2022,zhang2024consensus,zhang2023stability,zhang2025automatic}, we will extend our approach to handle nonlinear MASs with homogeneous and heterogeneous dynamics under jointly connected topologies.

\bibliographystyle{ieeetr}
\bibliography{ref1}

\end{document}